\newcommand{\vv}[1]{\bv^{(#1)}}
\title{On the Maximal Gaussian Perimeter of Convex Sets, Revisited}
\author[]{Shivam Nadimpalli}
\author[]{Caleb Pascale}
\affil[]{Massachusetts Institute of Technology} 
\affil[]{\texttt{\{shivamn,\,psvita\}@mit.edu}}
\date{\today}
\begin{document}


\maketitle

\begin{abstract}
    We revisit Nazarov's construction~\cite{Nazarov:03} of a convex set with nearly maximal Gaussian surface area and give an alternate analysis based on the notion of \emph{convex influence} introduced in~\cite{DNS21itcs,DNS22}. 
\end{abstract}


\section{Introduction}
\label{sec:intro}

The \emph{Gaussian surface area} of a (Borel) set $K \sse \R^n$ is defined as   
\[
    \GSA(K) := \lim_{\delta \to 0} \frac{\Vol(K_t \setminus K)}{t}\,,
\]
where $K_t$ is the $t$-enlargement of $K$ (that is, $K_t = \{x \in \R^n : \exists y\in K~\text{s.t.}~\|x-y\|\leq t \}$) and $\Vol(K)$ denotes the measure of $K$ under the $n$-dimensional standard Gaussian distribution $N(0,I_n)$. 
It is a fundamental complexity measure in high-dimensional geometry, arising naturally in the context of noise stability and isoperimetry~\cite{Borell:75,sudakov1978extremal}. 
It has also found applications to problems in probability theory~\cite{bentkus1986dependence,Bentkus:03,raivc2019multivariate} and theoretical computer science~\cite{KOS:08,Kane:10,DMN19,DNS23-polytope}. 

For convex sets (and more generally, sets with smooth boundary), the above definition  coincides with the following (see~\cite{Nazarov:03}): 

\begin{definition} \label{def:GSA}
    If $K \sse \R^n$ is convex, then 
    \[
        \GSA(K) = \int_{\partial K} \phi_n(x) \, d\sigma(x)\,,
    \]
    where $d\sigma(\cdot)$ denotes the surface measure in $\R^n$ and $\phi_n(x) = (2\pi)^{-n/2} e^{-\|x\|^2/2}$ is the density of the $n$-dimensional standard Gaussian measure. 
\end{definition}

Motivated by problems in high-dimensional probability, Ball~\cite{Ball:93} considered the following reverse isoperimetric problem for the Gaussian measure: how large can $\GSA(K)$ be for a convex set $K \sse \R^n$? 
It will be convenient to write 
\[
    \Gamma(n) := 
    \sup\cbra{\GSA(K) : \text{convex}~K\sse\R^n}\,.
\]
Ball established the following uniform bound on the Gaussian surface area of convex sets: 

\begin{theorem}[Theorem~4 of~\cite{Ball:93}] \label{thm:ball}
    We have $\Gamma(n) \leq 4n^{1/4}$. 
\end{theorem}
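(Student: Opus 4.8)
The plan is to reduce to a polytope, rewrite $\GSA$ first as a sum over facets and then as an integral over $S^{n-1}$, and finally to split the estimate according to whether $K$ contains a large ball centred near the origin. A standard approximation argument reduces the claim to a convex polytope $K=\bigcap_{i=1}^{m}\{x:\langle x,u_i\rangle\le a_i\}$ with $|u_i|=1$, and it is convenient (though it needs an argument when $K$ lies far from the origin) to arrange $0\in\mathrm{int}(K)$, so that every $a_i>0$; write $F_i=\partial K\cap\{\langle x,u_i\rangle=a_i\}$ for the facets.

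By Definition~\ref{def:GSA} we have $\GSA(K)=\sum_i\int_{F_i}\phi_n\,d\sigma$, and parametrizing the $i$-th hyperplane by $x=y+a_iu_i$ with $y\perp u_i$ (so $\|x\|^2=\|y\|^2+a_i^2$) yields the exact identity
\[
\GSA(K)=\frac{1}{\sqrt{2\pi}}\sum_{i=1}^{m}e^{-a_i^2/2}\,\gamma_{n-1}(\widetilde F_i),
\]
where $\widetilde F_i\sse u_i^{\perp}\cong\R^{n-1}$ is the orthogonal projection of $F_i$ and $\gamma_{n-1}$ is the standard Gaussian measure on $\R^{n-1}$. Merely bounding $\gamma_{n-1}(\widetilde F_i)\le1$ is far too lossy, since the radial shadows of the $F_i$ tile $S^{n-1}$; instead I would pass to the radial parametrization $\omega\mapsto r(\omega)\,\omega$ of $\partial K$ by $S^{n-1}$, whose Jacobian is $r(\omega)^{n-1}/\langle\omega,\nu(\omega)\rangle$ with $\langle\omega,\nu(\omega)\rangle=a(\omega)/r(\omega)$, to obtain
\[
\GSA(K)=\frac{1}{(2\pi)^{n/2}}\int_{S^{n-1}}\frac{r(\omega)^{n}}{a(\omega)}\,e^{-r(\omega)^2/2}\,d\sigma(\omega),
\]
where $r(\omega)$ is the distance from the origin to $\partial K$ in direction $\omega$ and $a(\omega)$ is the distance from the origin to the supporting hyperplane of $K$ at the corresponding boundary point. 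Convexity enters through $a(\omega)\le r(\omega)$ and, crucially, through $a(\omega)\ge\rho$ whenever $\rho B_2^{n}\sse K$.

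\emph{The round case.} Suppose $\rho B_2^{n}\sse K$ with $\rho\ge n^{1/4}$. Then $a(\omega)\ge\rho$ for all $\omega$, so using $r^{n}e^{-r^2/2}\le\max_{r>0}(r^{n}e^{-r^2/2})=n^{n/2}e^{-n/2}$ and the elementary bound $\Gamma(n/2)>\sqrt{2\pi}\,(n/2)^{(n-1)/2}e^{-n/2}$,
\[
\GSA(K)\le\frac{1}{\rho}\cdot\frac{n^{n/2}e^{-n/2}\,\sigma(S^{n-1})}{(2\pi)^{n/2}}=\frac{1}{\rho}\cdot\frac{2\,n^{n/2}e^{-n/2}}{2^{n/2}\,\Gamma(n/2)}<\frac{1}{\rho}\sqrt{\frac{n}{\pi}},
\]
which for $\rho\ge n^{1/4}$ is below $n^{1/4}/\sqrt{\pi}$, comfortably less than $4n^{1/4}$. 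So this case is essentially a one-line Gaussian computation once the change of variables is in hand.

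\emph{The flat case, and the main obstacle.} If $K$ admits no such ball then, by Steinhagen's inequality (relating minimal width to inradius), $K$ is essentially thin: it lies in a slab $\{c\le\langle x,v\rangle\le c+w\}$ with $w=O(n^{3/4})$ --- getting this dichotomy exactly right, in tandem with the origin reduction, is itself part of what must be arranged. I would then split $\partial K$ into the ``lids'', where $|\langle\nu,v\rangle|$ is bounded away from $0$, and the ``sides'', where it is small: the lids are graphs of bounded slope over $v^{\perp}$ and contribute only $O(1)$ to $\GSA$, while slicing the sides by $s=\langle x,v\rangle$ and applying the coarea formula bounds their contribution by roughly $\int\phi_1(s)\,\GSA_{n-1}(K_s)\,ds$, the Gaussian average of the surface areas of the $(n-1)$-dimensional slices $K_s$. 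The difficulty --- and the heart of the matter --- is to close this without losing the $n^{1/4}$: a naive induction $\Gamma(n)\le(1+o(1))\,\Gamma(n-1)+O(1)$ degrades to $\Gamma(n)=O(n)$ or worse, so one must exploit that a genuinely flat body is flat along \emph{many} mutually orthogonal directions at once, collapse all of those directions simultaneously to land in a bounded low-dimensional convex body, and finish there by the round-case estimate (or by induction in a far smaller dimension). Carefully balancing the slab width against the number of collapsed directions is precisely what produces the exponent $1/4$ and the constant $4$, and, together with the origin reduction, is the part of the argument that demands the most care.
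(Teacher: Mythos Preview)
The paper does not prove \Cref{thm:ball}; it is quoted as Ball's result and serves only as context for the lower bound. Indeed, \Cref{sec:GSA-ubs} explicitly asks whether the paper's convex-influence viewpoint could yield an alternative proof of Ball's bound and leaves this open. So there is no ``paper's own proof'' to compare against here.

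On its own merits: your radial formula and the round case are correct and clean. The gap is the flat case, which by your own description is a plan rather than a proof. There is, moreover, a structural tension in the dichotomy you set up. The round case needs an \emph{origin-centred} ball so that $a(\omega)\ge\rho$ for every $\omega$, whereas Steinhagen's inequality---which you invoke to obtain a slab of width $O(n^{3/4})$---concerns the translation-invariant inradius. The negation of ``$n^{1/4}B_2^n\subseteq K$'' only says that some supporting hyperplane passes within $n^{1/4}$ of the origin; it does not force the inradius of $K$ to be small, and hence does not by itself produce a thin slab. You cannot repair this by translating $K$ to its incentre, since Gaussian surface area is not translation invariant---this is precisely the ``origin reduction'' you flag but do not carry out. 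Even granting a slab, ``collapse many mutually orthogonal flat directions simultaneously'' is not executed: you do not say how many such directions exist, why their number interacts with the slab width to yield the exponent $1/4$, or why the residual body falls into the round regime. As written, the proposal establishes the bound only for bodies containing a centred ball of radius $n^{1/4}$.
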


This order of growth is sharp: Nazarov~\cite{Nazarov:03} showed the existence of a convex set $K \sse \R^n$ with surface area on the order of $\Omega(n^{1/4})$ for $n$ sufficiently large.   
Nazarov's construction has since found applications in learning theory~\cite{KOS:08}, polyhedral approximation~\cite{DNS23-polytope}, and property testing~\cite{CDNSW23inprep}. 


\begin{theorem}[\cite{Nazarov:03}] \label{thm:naz}
    We have 
    \[
        \liminf_{n\to\infty} \frac{\Gamma(n)}{n^{1/4}} \geq e^{-5/4}\,. 
    \]
\end{theorem}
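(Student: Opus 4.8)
The plan is to realize Nazarov's body as a random polytope and to lower bound its \emph{expected} Gaussian surface area. Fix a threshold $a>0$ and a facet count $N$, let $v_1,\dots,v_N$ be i.i.d.\ uniformly random unit vectors in $\R^n$, and put $K=\bigcap_{i=1}^N\{x\in\R^n:\langle x,v_i\rangle\le a\}$, a convex body containing the radius-$a$ ball about the origin. Since $\GSA$ is deterministic once the $v_i$ are fixed, the probabilistic method reduces the theorem to exhibiting $a=a(n)$ and $N=N(n)$ with $\mathbb E[\GSA(K)]\ge(1-o(1))\,e^{-5/4}\,n^{1/4}$, as then some realization witnesses $\Gamma(n)\ge(1-o(1))e^{-5/4}n^{1/4}$. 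For a.e.\ choice of the $v_i$, the boundary of $K$ is, up to surface measure zero, the disjoint union of the exposed facets $F_i=\{x:\langle x,v_i\rangle=a,\ \langle x,v_j\rangle\le a~\text{for all }j\}$ (their pairwise intersections lie in affine subspaces of codimension $\ge2$), so $\GSA(K)=\sum_{i=1}^N\int_{F_i}\phi_n\,d\sigma$ and, by exchangeability, $\mathbb E[\GSA(K)]=N\cdot\mathbb E\!\big[\int_{F_1}\phi_n\,d\sigma\big]$. Conditioning on $v_1=e_1$ and parametrizing the hyperplane $\{x_1=a\}$ by $(a,y)$ with $y\in\R^{n-1}$, so that $\phi_n(a,y)=\phi_1(a)\phi_{n-1}(y)$, independence of the remaining normals gives
\[
    \mathbb E\!\Big[\int_{F_1}\phi_n\,d\sigma\Big]=\phi_1(a)\cdot\mathbb E_{y\sim N(0,I_{n-1})}\big[(1-p(y))^{N-1}\big],
\]
where $p(y)=\Pr_{v}[\langle(a,y),v\rangle>a]$ is a one-dimensional spherical-cap probability depending on $y$ only through $\|(a,y)\|=\sqrt{a^2+\|y\|^2}$. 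This identity and the per-facet reduction are precisely what the convex-influence calculus of~\cite{DNS21itcs,DNS22} packages cleanly, and it is from that vantage point that the analysis is run.

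The crux is the critical scaling $a=n^{1/4}$, which makes $a^4/n=1$ and forces two $\Theta(1)$-order effects to persist in the limit. First, a coordinate of a uniform point on $S^{n-1}$ is $\approx N(0,1/n)$ but with a correction beyond the Gaussian approximation, so that $p(y)=\bar\Phi\!\big(a\sqrt n/\|(a,y)\|\big)\cdot e^{-1/4}(1+o(1))$, the factor $e^{-1/4}$ arising as $e^{-a^4/(4n)}$ at $a=n^{1/4}$. Second, $\|y\|^2$ has $\chi^2_{n-1}$ fluctuations of order $\sqrt n$ about $n-1$, which displace the argument of $\bar\Phi$ from $a$ by $\Theta(n^{-1/4})$, so that $\bar\Phi\!\big(a\sqrt n/\|(a,y)\|\big)=\bar\Phi(a)\,e^{1/2}e^{z/\sqrt 2}(1+o(1))$, where $z\sim N(0,1)$ is the normalized fluctuation; hence $p(y)=\bar\Phi(a)\,e^{1/4}e^{z/\sqrt 2}(1+o(1))$. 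Now choose $N$ so that $(N-1)\bar\Phi(a)=e^{-1/4}$ — which forces $\log N=(1/2+o(1))\sqrt n$ — so that $(N-1)p(y)=(1+o(1))\,e^{z/\sqrt 2}$, giving $(1-p(y))^{N-1}=\exp\!\big(-(1+o(1))\,e^{z/\sqrt 2}\big)$, while $N\phi_1(a)=(1+o(1))\,a\,(N-1)\bar\Phi(a)=(1+o(1))\,e^{-1/4}n^{1/4}$ by the Mills-ratio asymptotic. Consequently
\[
    \mathbb E[\GSA(K)]=(1+o(1))\,e^{-1/4}n^{1/4}\cdot\mathbb E_{z\sim N(0,1)}\big[\exp(-e^{z/\sqrt 2})\big]\ \ge\ (1+o(1))\,e^{-5/4}n^{1/4},
\]
where the inequality is the scalar fact $\mathbb E_z[e^{-e^{cz}}]\ge e^{-1}$, valid for every $c$ — equivalently $e^{-t}+e^{-1/t}\ge2e^{-1}$ for all $t>0$, which follows by symmetrizing $z\mapsto-z$ and noting that $\frac{d}{dt}(t-1/t-2\log t)=(1-1/t)^2\ge0$. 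The probabilistic method now supplies a realization of $K$ with $\GSA(K)\ge(1-o(1))e^{-5/4}n^{1/4}$, which is the theorem.

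I expect the main obstacle to be making the asymptotic evaluation of $\mathbb E_y[(1-p(y))^{N-1}]$ above rigorous with exactly the stated constants. One must (a) establish $p(y)=\bar\Phi\!\big(a\sqrt n/\|(a,y)\|\big)\,e^{-1/4}(1+o(1))$ with an error bound uniform over the range of $\|(a,y)\|$ carrying the relevant Gaussian mass; (b) replace $\|y\|^2$ by its Gaussian approximation uniformly over that range — a quantitative CLT for $\chi^2_{n-1}$ — while controlling the contribution of atypical $y$ via the trivial bound $(1-p(y))^{N-1}\le1$ together with $\Pr[\text{atypical}]=o(1)$; and (c) confirm that the two $\Theta(1)$-order effects above are the only ones, i.e.\ that the next correction term in the cap probability, the curvature of $\log\bar\Phi$, and the non-Gaussian part of the $\chi^2$ fluctuation all contribute only $o(1)$. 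The remaining pieces — the probabilistic method, the reduction to exposed facets, and the one-variable inequality — are short.
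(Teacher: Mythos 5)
Your proposal is correct in outline and recovers the right constant, using the same Nazarov random polytope $K=\bigcap_i\{\langle x,v_i\rangle\le a\}$ with $a=n^{1/4}$ and $\log N\sim\sqrt n/2$, the same probabilistic-method reduction to lower-bounding $\E[\GSA(K)]$, and the same per-facet (equivalently, convex-influence) identity $\E[\GSA(K)]=N\phi_1(a)\,\E_y[(1-p(y))^{N-1}]$, which is the paper's \Cref{eq:inf-dilation-equality} in disguise since on each facet $x\cdot\nu_x=a$. The genuine difference is in how the two arguments dispose of the radial fluctuation. The paper restricts to the annulus $A=\{\|x\|\in[\sqrt n-t,\sqrt n+t]\}$, lower-bounds the expectation by $\Vol(A)\cdot\inf_A(\,\cdot\,)$, and asserts in \Cref{eq:mass} that $G(x)/\inf_A G=1+o(1)$ uniformly over $A$; you instead observe that the radial fluctuation contributes a genuine $\Theta(1)$-order lognormal factor $e^{z/\sqrt2}$, so the answer is an honest expectation $\E_z[\exp(-e^{z/\sqrt2})]$, which you bound below by $e^{-1}$ via the pointwise inequality $e^{-u}+e^{-1/u}\ge2e^{-1}$ (your verification that $u-1/u-2\log u$ has derivative $(1-1/u)^2\ge0$ and vanishes at $u=1$ is correct). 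Your treatment seems the more careful one at this step: since $\tfrac{d}{d\rho}\log G(\rho)=\tfrac1\rho+\tfrac{r^2(n-3)}{\rho^3}\approx1$ near $\rho=\sqrt n$, one has $\sup_A G/\inf_A G=e^{(2+o(1))t}$, which is not $1+o(1)$ for any $t\to\infty$; so replacing the integral by its infimum over an annulus of width $\omega(1)$ is lossy, and the CLT-plus-lognormal route you take is how one actually has to account for these fluctuations to retain the constant. (Your own choice $\log N=(1/2+o(1))\sqrt n$ also matches the paper's implicit $s\sim e^{\sqrt n/2}$.)

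One comment on completeness: as you yourself flag, the asymptotic substitutions $p(y)=\bar\Phi(a\sqrt n/\|(a,y)\|)e^{-1/4}(1+o(1))$ and $\bar\Phi(a\sqrt n/\|(a,y)\|)=\bar\Phi(a)e^{1/2}e^{z/\sqrt2}(1+o(1))$, and the passage $\E_y[(1-p(y))^{N-1}]\to\E_z[\exp(-e^{z/\sqrt2})]$, all require uniform error control plus a crude bound on atypical $y$; with the trivial bound $(1-p)^{N-1}\le1$ on the atypical set this is routine but not yet written. Also note that the bound $\E_z[e^{-\mu e^{cz}}]\ge e^{-\mu}$ you invoke only holds at $\mu=1$ (it fails, e.g., at $\mu=0.1$, $t=2$); your choice $(N-1)\bar\Phi(a)=e^{-1/4}$ is precisely what puts you at $\mu=1$, so the argument is consistent, but this is worth stating explicitly since $\mu=1$ is also the maximizer of $\mu e^{-\mu}$, so no constant is left on the table relative to the inequality used.
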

Nazarov~\cite{Nazarov:03} lamented that his proof of~\Cref{thm:naz} (and of his sharpening of~\Cref{thm:ball}; see below) was ``a pretty boring and technical computation'' and speculated that ``there should exist some simple and elegant way leading to the result,''  even encouraging the reader to ``stop reading and (try to) prove the theorem themselves.'' 
This note attempts to give such a proof. 
In particular, our argument avoids the heavier analytic machinery (such as the Laplace asymptotic formula) and technical estimates of Nazarov's original proof. 
Our key idea is to analyze an isoperimetric quantity closely related to Gaussian surface area, namely  \emph{convex influence} (introduced in~\cite{DNS21itcs,DNS22} and recalled below in~\Cref{subsec:convex-influence}), which is considerably easier to calculate.  

We note that a gap remains between the best known upper and lower bounds for $\Gamma(n)$. 
Nazarov~\cite{Nazarov:03} himself sharpened Ball's bound (\Cref{thm:ball}), and a subsequent refinement by Rai\v{c}~\cite{raivc2019multivariate} obtained the following estimate, which, to the best of our knowledge, is the current state of the art:
\[
    \Gamma(n) \leq \sqrt{\frac{2}{\pi}} + 0.59(n^{1/4} - 1)\,. 
\]
\Cref{thm:naz} continues to be the best known lower bound on $\Gamma(n)$; note that $e^{-5/4} \approx 0.2865$.  

\

\noindent \textbf{Organization.}~We recall useful preliminaries in~\Cref{sec:prelims} and present our proof of~\Cref{thm:naz} in \Cref{sec:proof}. 

\section{Preliminaries}
\label{sec:prelims}

We write $[n] := \{1, \dots, n\}$. 
We use boldfaced letters (such as $\bx$, $\bv$, and $\bK$) to denote random variables (which may be real-valued, vector-valued, or set-valued; the intended type will be clear from the context. 
We write $\bx \sim \calD$ to indicate that the random variable $\bx$ is distributed according to probability distribution $\calD$. 
Finally, $\S^{n-1} = \{x \in \R^n : \|x\| = 1\}$ will denote the unit sphere in $\R^n$. 

\subsection{Distributions and Tail Bounds}
\label{subsec:prob}

We write $N(0, I_n)$ for the $n$-dimensional standard Gaussian distribution over $\R^n$ with density $\phi_n(\cdot)$. 
When the dimension $n$ is clear from context, we will sometimes write $\phi$ instead of $\phi_n$. 
The Gaussian measure of a (measurable) set $K \sse \R^n$ will be denoted by $\Vol(K)$, i.e., 
\[
	\Vol(K) := \Prx_{\bx\sim N(0, I_n)}\sbra{\bx \in K}\,.
\]
We recall a standard concentration bound for univariate Gaussian random variables:

\begin{proposition}[Proposition 2.1.2 of~\cite{vershynin2018high}] \label{prop:gaussian-tails}
	Let $\bx \sim N(0, 1)$. 
	Then for all $t > 0$, we have 
	\[
		\pbra{\frac{1}{t} - \frac{1}{t^3}}\cdot\frac{1}{\sqrt{2\pi}}e^{-t^2/2} \leq \Pr\sbra{\bg\geq t} \leq \frac{1}{t}\cdot\frac{1}{\sqrt{2\pi}}e^{-t^2/2}\,.
	\]
\end{proposition}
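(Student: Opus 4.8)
The plan is to prove the two inequalities separately by elementary manipulations of
$\Pr[\bx \ge t] = \tfrac{1}{\sqrt{2\pi}}\int_t^\infty e^{-x^2/2}\,dx$; both are classical, and I expect the only (mild) subtlety to be in the lower bound.

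\textbf{Upper bound.} First I would use that $1 \le x/t$ on the entire range of integration, which gives
\[
\int_t^\infty e^{-x^2/2}\,dx \;\le\; \frac{1}{t}\int_t^\infty x\, e^{-x^2/2}\,dx \;=\; \frac{1}{t}\,e^{-t^2/2},
\]
the last equality because $x\,e^{-x^2/2} = -\frac{d}{dx}e^{-x^2/2}$ and the contribution at $+\infty$ vanishes. Dividing through by $\sqrt{2\pi}$ yields the stated upper bound.

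\textbf{Lower bound.} Here the key observation is the antiderivative identity
\[
\frac{d}{dx}\!\left[\left(\frac{1}{x^3} - \frac{1}{x}\right)e^{-x^2/2}\right] = \left(1 - \frac{3}{x^4}\right)e^{-x^2/2},
\]
which one checks by direct differentiation. Integrating from $t$ to $\infty$ (again the boundary term at $+\infty$ vanishes) yields $\int_t^\infty\!\left(1 - \frac{3}{x^4}\right)e^{-x^2/2}\,dx = \left(\frac{1}{t} - \frac{1}{t^3}\right)e^{-t^2/2}$. Since $1 - 3/x^4 \le 1$, the left-hand side is at most $\int_t^\infty e^{-x^2/2}\,dx = \sqrt{2\pi}\,\Pr[\bx \ge t]$, and rearranging gives the lower bound. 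Equivalently, one can integrate $\int_t^\infty \frac{1}{x}\cdot x e^{-x^2/2}\,dx$ by parts twice and discard the leftover nonnegative integral.

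I anticipate essentially no real difficulty: the upper bound is a one-line estimate, and the lower bound requires only the right antiderivative (or, equivalently, two integrations by parts). The only things to watch are the sign bookkeeping and the vanishing of the boundary terms at $+\infty$; one may also observe that for $0 < t \le 1$ the claimed lower bound is trivial, since its left-hand side is then nonpositive.
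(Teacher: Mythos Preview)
The paper does not supply its own proof of this proposition; it is stated with a citation to Vershynin and used as a black box. Your argument is correct and is exactly the classical proof (and indeed the one in Vershynin's text): the $1\le x/t$ trick for the upper bound and the antiderivative $\bigl(\tfrac{1}{x^3}-\tfrac{1}{x}\bigr)e^{-x^2/2}$ for the lower bound.
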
 

We will also require the following concentration bound on the norm of a Gaussian random vector: 

\begin{proposition}[Equation 3.3 of \cite{vershynin2018high}] \label{prop:gaussian-norm}
	Let $t > 0$. 
	We have 
	\[
		  \Prx_{\bx\sim N(0,I_n)}\sbra{\abs{\|\bx\| - \sqrt{n}} \geq t} \leq 2e^{-C t^2}\,,
	\]
    where $C$ is an absolute constant. 
\end{proposition}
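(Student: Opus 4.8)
The plan is to deduce this from the classical Gaussian concentration inequality for $1$-Lipschitz functions. The map $f\colon\R^n\to\R$ given by $f(x)=\|x\|$ is $1$-Lipschitz by the triangle inequality, so the Gaussian isoperimetric inequality (in its Borell--Sudakov--Tsirelson form) yields
\[
    \Prx_{\bx\sim N(0,I_n)}\sbra{\,\abs{\|\bx\| - \mathbf{E}\,\|\bx\|} \geq s\,} \;\leq\; 2e^{-s^2/2} \qquad \text{for all } s > 0.
\]
Thus it suffices to show that $\mathbf{E}\,\|\bx\|$ differs from $\sqrt{n}$ by at most an absolute constant; in fact $0 \le \sqrt{n} - \mathbf{E}\,\|\bx\| = O(1/\sqrt n)$, but the crude bound is all that is needed.

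For the mean estimate, the upper bound $\mathbf{E}\,\|\bx\| \le \sqrt{\mathbf{E}\,\|\bx\|^2} = \sqrt{n}$ is immediate from Jensen's inequality. For the lower bound, note that the Gaussian Poincar\'e inequality (or the displayed concentration bound above) gives $\mathrm{Var}(\|\bx\|) \le \mathbf{E}\,\|\nabla f(\bx)\|^2 = 1$, so $(\mathbf{E}\,\|\bx\|)^2 = \mathbf{E}\,\|\bx\|^2 - \mathrm{Var}(\|\bx\|) \ge n - 1$, hence $\mathbf{E}\,\|\bx\| \ge \sqrt{n-1} \ge \sqrt{n} - 1$. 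Consequently $\abs{\|\bx\| - \sqrt{n}} \le \abs{\|\bx\| - \mathbf{E}\,\|\bx\|} + 1$ pointwise.

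To conclude, fix $t > 0$. If $t \ge 2$, then the pointwise inequality above shows $\{\,\abs{\|\bx\| - \sqrt n} \ge t\,\} \sse \{\,\abs{\|\bx\| - \mathbf{E}\,\|\bx\|} \ge t-1\,\} \sse \{\,\abs{\|\bx\| - \mathbf{E}\,\|\bx\|} \ge t/2\,\}$, so the probability in question is at most $2e^{-t^2/8}$. If $t < 2$, then $2e^{-t^2/8} > 2e^{-1/2} > 1$ and the bound is trivial. Taking $C = 1/8$ (or any smaller absolute constant) completes the argument. I expect the only mildly delicate point to be the passage from the concentration of $\|\bx\|^2$ — which is the quantity one most naturally controls, being a sum of i.i.d.\ squared Gaussians — to that of $\|\bx\|$ itself; the Lipschitz route above sidesteps this cleanly, whereas the alternative via Bernstein's inequality for the sub-exponential variables $\bx_i^2 - 1$ forces one to treat the regimes $t \lesssim \sqrt n$ and $t \gtrsim \sqrt n$ separately and verify the stated sub-Gaussian-type tail in both.
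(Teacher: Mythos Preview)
Your argument is correct. The paper does not supply its own proof of this proposition: it is quoted verbatim as Equation~3.3 of Vershynin's textbook and used as a black box, so there is no in-paper proof to compare against.

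For what it is worth, your route via Gaussian Lipschitz concentration (Borell--Sudakov--Tsirelson applied to $x\mapsto\|x\|$, followed by the Poincar\'e bound $\mathrm{Var}(\|\bx\|)\le 1$ to pin down the mean) is clean and self-contained. The proof in Vershynin's book instead goes through the sub-exponential concentration of $\|\bx\|^2 - n = \sum_i(\bx_i^2-1)$ via Bernstein's inequality and then transfers this to $\|\bx\|-\sqrt n$; as you note in your final remark, that alternative forces a case split between the sub-Gaussian and sub-exponential regimes, whereas the Lipschitz approach avoids it entirely. Either way, the constant $C$ one obtains is absolute, which is all the paper needs.
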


Let $\S^{n-1} := \{ x : \|x\| = 1\}$ denote the unit sphere in $\R^n$ endowed with the uniform measure. 
It is known (see, for example, Appendix~D of~\cite{KOS:08}) that for any $x \in \R^n$ and $r \geq 0$, we have 
\begin{equation} \label{eq:spherical-cap-exact}
	\Prx_{\bv\sim\S^{n-1}}\sbra{x\cdot \bv \leq r} = \tau_n\int_{-1}^{\min\cbra{\frac{r}{\|x\|}, 1}} (1-z^2)^{\frac{(n-3)}{2}}\,dz \qquad\text{where}~\tau_n := \frac{\Gamma\pbra{\frac{n}{2}}}{\sqrt{\pi}\cdot\Gamma\pbra{\frac{n-1}{2}}}\,.
\end{equation}
Stirling's approximation (or alternatively, Gautschi's inequality~\cite{gautschi1959some}) yields the following: 
\begin{equation} \label{eq:tau-asymp}
	\sqrt{\frac{n}{2\pi}}(1+O(n^{-1})) \leq \tau_n \leq \sqrt{\frac{n}{2\pi}}\,.
\end{equation}

We will also require the following estimate on the measure of spherical caps: 

\begin{lemma} \label{lemma:cap-ub}
	Let $x \in \R^n$ and $r \geq 0$ such that $\|x\| \geq r$. Then 
	\[
		\Prx_{\bv\sim\S^{n-1}}\sbra{x\cdot \bv \leq r} \geq 1 - \tau_n\cdot \frac{\|x\|}{r(n-3)}\cdot e^{-\frac{r^2(n-3)}{2\|x\|^2}}\,.
	\]
\end{lemma}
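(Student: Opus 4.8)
The plan is to work directly from the exact spherical-cap formula \eqref{eq:spherical-cap-exact}. Since $\|x\|\geq r\geq 0$ by hypothesis (and assuming $x\neq 0$ and $r>0$, the remaining cases being degenerate and trivially true), the upper limit $\min\{r/\|x\|,1\}$ equals $\rho := r/\|x\| \in (0,1]$, so
\[
	\Prx_{\bv\sim\S^{n-1}}\sbra{x\cdot\bv \leq r} = \tau_n\int_{-1}^{\rho}(1-z^2)^{(n-3)/2}\,dz\,.
\]
Applying \eqref{eq:spherical-cap-exact} instead with $r\geq\|x\|$ (where the probability is manifestly $1$, since $x\cdot\bv\leq\|x\|\leq r$) shows that $\tau_n\int_{-1}^{1}(1-z^2)^{(n-3)/2}\,dz=1$; this is just the statement that $\tau_n$ is defined so as to normalize the Beta integral $\int_{-1}^1(1-z^2)^{(n-3)/2}\,dz=B(\tfrac12,\tfrac{n-1}{2})=1/\tau_n$. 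Hence
\[
	\Prx_{\bv\sim\S^{n-1}}\sbra{x\cdot\bv \leq r} = 1-\tau_n\int_{\rho}^{1}(1-z^2)^{(n-3)/2}\,dz\,,
\]
and the whole task reduces to upper bounding the tail integral $\int_{\rho}^{1}(1-z^2)^{(n-3)/2}\,dz$.

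For that I would use two elementary moves. First, $1-z^2\leq e^{-z^2}$ (from $1+u\leq e^u$), hence $(1-z^2)^{(n-3)/2}\leq e^{-(n-3)z^2/2}$ on $[0,1]$ — here I assume $n\geq 4$ so that the exponent $(n-3)/2$ is positive, the stated bound being vacuous or trivial otherwise. Second, extending the range of integration to $[\rho,\infty)$ and inserting the factor $z/\rho\geq 1$ gives the standard Gaussian-tail estimate
\[
	\int_{\rho}^{1}(1-z^2)^{(n-3)/2}\,dz \leq \int_{\rho}^{\infty}e^{-(n-3)z^2/2}\,dz \leq \frac{1}{\rho}\int_{\rho}^{\infty}z\,e^{-(n-3)z^2/2}\,dz = \frac{1}{\rho(n-3)}\,e^{-(n-3)\rho^2/2}\,.
\]
Multiplying through by $\tau_n$ and substituting $\rho=r/\|x\|$ back in yields exactly
\[
	\Prx_{\bv\sim\S^{n-1}}\sbra{x\cdot\bv \leq r} \geq 1-\tau_n\cdot\frac{\|x\|}{r(n-3)}\cdot e^{-r^2(n-3)/(2\|x\|^2)}\,,
\]
as claimed.

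I do not expect any genuine obstacle here: the argument is a short chain of one-line estimates. The only points worth flagging are the normalization of $\tau_n$ (immediate from \eqref{eq:spherical-cap-exact} in the regime $r\geq\|x\|$) and the mild restriction to $n\geq 4$, $r>0$, $x\neq 0$ needed for the right-hand side to be non-degenerate. If a sharper constant were desired one could retain $(1-z^2)^{(n-3)/2}$ and integrate by parts rather than passing through $1-z^2\leq e^{-z^2}$, but the crude bound already matches the target expression and keeps the proof clean.
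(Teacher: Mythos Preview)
Your proof is correct and follows essentially the same route as the paper: write the cap probability via \eqref{eq:spherical-cap-exact}, bound $(1-z^2)^{(n-3)/2}\leq e^{-(n-3)z^2/2}$, extend the integral to $\infty$, and apply the Mills-ratio Gaussian tail bound. The only cosmetic difference is that you carry out the tail estimate directly via the $z/\rho\geq 1$ trick, whereas the paper routes the same computation through \Cref{prop:gaussian-tails}.
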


Standard estimates on the measure of spherical caps---see, for example, \cite{ball1997elementary,tkocz2012upper}---obtain the coarser bound
\[
	\Prx_{\bv\sim\S^{n-1}}\sbra{x\cdot\bv\leq r} \geq 1 - e^{-\frac{nr^2}{2\|x\|^2}}\,.
\]
\Cref{lemma:cap-ub} is sharper when $\|x\| \ll r\sqrt{n}$; this is exactly the regime that will arise later in our proof of~\Cref{thm:naz}. 

\begin{proof}
	Thanks to~\Cref{eq:spherical-cap-exact}, we have
	\begin{align*}
		\Prx_{\bv\sim\S^{n-1}}\sbra{x\cdot \bv > r}
		&= \tau_n \int_{\frac{r}{\|x\|}}^1 (1-z^2)^{\frac{(n-3)}{2}} \,dz \\
		&\leq \tau_n \int_{\frac{r}{\|x\|}}^\infty e^{\frac{-z^2(n-3)}{2}} \,dz \tag{$1+y\leq e^y$} \\
		&= \tau_n\cdot \sqrt{\frac{2\pi}{n-3}}\cdot\Prx_{\by \sim N(0,1)}\sbra{\by > \frac{r\sqrt{n-3}}{\|x\|}} \\
		&\leq \tau_n\cdot \frac{\|x\|}{r(n-3)}\cdot e^{-\frac{r^2(n-3)}{2\|x\|^2}} \tag{\Cref{prop:gaussian-tails}}\,.
	\end{align*}
	The result follows immediately. 
\end{proof}

\subsection{Convex Influence}
\label{subsec:convex-influence}

The following quantity was introduced in~\cite{DNS21itcs,DNS22} as a convex set analogue of the well-studied notion of \emph{total influence} from the analysis of Boolean functions~\cite{odonnell-book}:

\begin{definition}[Total convex influence] \label{def:influence}
	Given a convex set $K \sse \R^n$, we define its \emph{total convex influence} $\TInf[K]$ as 
	\[
		\TInf[K] := 
		\frac{d}{dt}\Vol\pbra{(1+t)K}\bigg|_{t=0}\,.
	\]
\end{definition}

We will also make use of the following alternative characterization of total convex influence from~\cite{DNS23-polytope}, and give a self-contained proof for completeness: 

\begin{lemma}[Lemma~67 of~\cite{DNS23-polytope}] \label{lemma:surf-integral}
	Given {a convex set} $K\sse\R^n$, we have 
	\begin{equation*}
		\TInf[K] = \int_{\partial K} (x\cdot\nu_x)\,\phi(x)\,d\sigma(x)
	\end{equation*}
	where $\nu_x$ denotes the unit normal to $\partial K$ at $x$ and $d\sigma(\cdot)$ denotes the surface measure.
\end{lemma}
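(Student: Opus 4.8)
The plan is to compute $\TInf[K]$ directly from the definition via a change of variables, then recognize the resulting bulk integral as the divergence of a natural vector field so that the surface integral appears through the divergence theorem.

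First I would unfold the definition: $\Vol\pbra{(1+t)K} = \int_{(1+t)K}\phi_n(x)\,dx$, and substitute $x = (1+t)y$ (so $dx = (1+t)^n\,dy$) to get $\Vol\pbra{(1+t)K} = \int_K (1+t)^n\,\phi_n((1+t)y)\,dy$. Writing $\phi_n((1+t)y) = (2\pi)^{-n/2}e^{-(1+t)^2\|y\|^2/2}$ and differentiating the integrand in $t$ at $t = 0$ yields $n\,\phi_n(y) - \|y\|^2\,\phi_n(y)$. Both the integrand and this $t$-derivative are dominated, uniformly for $t$ in a neighborhood of $0$, by a fixed integrable function (the Gaussian decay swallows the polynomial factors), so differentiation under the integral sign is legitimate and
\[
    \TInf[K] = \int_K \pbra{n - \|y\|^2}\,\phi_n(y)\,dy\,.
\]

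Next I would introduce the vector field $F(x) := x\,\phi_n(x)$ and note the one-line computation $\nabla\cdot F(x) = n\,\phi_n(x) + x\cdot\nabla\phi_n(x) = \pbra{n - \|x\|^2}\phi_n(x)$, using $\nabla\phi_n(x) = -x\,\phi_n(x)$. Thus the integrand above is exactly $\nabla\cdot F$, and the divergence theorem gives $\int_K \nabla\cdot F\,dx = \int_{\partial K} F\cdot\nu_x\,d\sigma(x) = \int_{\partial K}(x\cdot\nu_x)\,\phi_n(x)\,d\sigma(x)$, which is the asserted identity.

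The main obstacle is justifying the divergence theorem in this generality: $K$ need not be bounded, and $\partial K$ need not be smooth. I would first establish the identity for bounded convex bodies with $C^1$ boundary, where the divergence theorem is classical, and then pass to a general convex $K$ by an exhaustion-and-smoothing argument — intersecting with large balls and mollifying — controlling all error terms via the rapid decay of $\phi_n$ together with the norm concentration of~\Cref{prop:gaussian-norm}, and using that a convex set's boundary has a well-defined outer normal $\nu_x$ for $\sigma$-almost every $x$. This limiting step is the only place requiring care; the algebraic core of the lemma is the single divergence computation above.
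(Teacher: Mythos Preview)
Your proof is correct and essentially identical to the paper's: both perform the change of variables $x=(1+t)y$, differentiate under the integral to obtain $\int_K (n-\|y\|^2)\phi(y)\,dy$, and then convert this bulk integral to the surface integral. The only cosmetic difference is that the paper phrases the last step as ``integration by parts'' after writing $(n-\|y\|^2)\phi(y) = \bigl(\calL\tfrac{\|y\|^2}{2}\bigr)\phi(y)$ via the Ornstein--Uhlenbeck operator, whereas you apply the divergence theorem directly to $F(x)=x\,\phi(x)$ --- but since $\phi\nabla(\|x\|^2/2)=x\,\phi(x)$ these are the same vector field and the same computation.
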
 

\begin{proof}
    We have 
    \begin{align}
        \TInf[K] 
        = \frac{d}{dt}\pbra{\int_{(1+t)K} \phi(x) \,dx}\bigg|_{t = 0} 
        &= \frac{d}{dt}\pbra{\int_{K} (1+t)^n \, \phi((1+t)y) \,dy}\bigg|_{t = 0} \tag{Writing $x = (1+t)y$} \nonumber \\ 
        &= \int_{K} \pbra{\frac{d}{dt}(1+t)^n \, \phi((1+t)y)}\bigg|_{t=0} \,dy  \tag{Fubini}\\
        &= \int_K (n-\|y\|^2)\,\phi(y)\,dy \label{eq:spectral-def} \\
        &= \int_K \calL\frac{\|y\|^2}{2}\,\phi(y)\,dy\,, \nonumber
    \end{align}
    where, for a function $f:\R^n \to \R$, we define $\calL f(x):= \Delta f(x) - \nabla f(x)\cdot x$. 
    Note that $\calL$ is the Ornstein--Uhlenbeck operator (cf.~Definition~11.24 and Proposition~11.26 of~\cite{odonnell-book}).
    Integrating by parts then gives  
    \[
        \TInf[K] = \int_{\partial K} (x\cdot \nu_x) \, \phi(x)\, d\sigma(x)\,.
    \]
    as desired. 
\end{proof} 

Note that the proof of~\Cref{lemma:surf-integral} does not require convexity of $K$ and in fact applies more generally to any set with finite Gaussian surface area. 
We additionally note that the expression in~\Cref{eq:spectral-def} is the original definition of convex influence from~\cite{DNS21itcs,DNS22}; we record a few easy consequences of it that rely on Hermite analysis (see~\Cref{sec:hermite} or Chapter~11 of~\cite{odonnell-book}). 
\Cref{eq:spectral-def} can be rewritten as 
\[
    \TInf[K] = \Ex_{\bx\sim N(0,I_n)}\sbra{K(\bx)(n-\|\bx\|)^2}\,,
\]
where we identify $K \sse \R^n$ with its $0/1$-valued indicator function. 
Writing $e_i$ for the $i^{\text{th}}$ standard basis vector, we have 
\[
    \TInf[K] = -\sqrt{2}\cdot\sumi \wh{K}(2e_i)\,,
\]
where $\{\wh{K}(2e_i)\}_i$ are the degree-$2$ Hermite coefficients of $K$ (see~\Cref{sec:hermite} and~\Cref{eq:h2}).  
Applying the Cauchy--Schwarz inequality yields  
\begin{equation} \label{eq:influence-ub}
    \TInf[K] \leq \pbra{2n\sumi \wh{K}(2e_i)^2}^{1/2}\,.
\end{equation}
In particular, for all (measurable) $K\sse\R^n$, we have $\TInf[K] \leq \sqrt{2n}$ by Parseval's formula (\Cref{eq:parseval}) since $K:\R^n\to\zo$ is Boolean valued. 

\section{A Simple Proof of~\Cref{thm:naz}}
\label{sec:proof}

As in Nazarov's original argument~\cite{Nazarov:03}, we will show the existence of a convex set with large Gaussian perimeter via the probabilistic method. 
We first give a sketch of why (a slight modification of) Nazarov's construction has $\Omega(n^{1/4})$ Gaussian surface area in~\Cref{subsec:warmup}, and then give the full argument recovering Nazarov's constant in~\Cref{subsec:e-5-4}. 

\subsection{Intuition: An $\Omega(n^{1/4})$ Lower Bound on Gaussian Surface Area}
\label{subsec:warmup}

Recent work~\cite{DNS23-polytope} on polyhedral approximation under the Gaussian measure gives a quick way to see the existence of a convex set with maximal Gaussian perimeter (up to constant factors).
We briefly sketch this argument below, leaving full details to the interested reader. 
The aim here is to give intuitive justification for the more explicit calculation carried out in \Cref{subsec:e-5-4}. 

Let $w, s > 0$ be parameters that we will set later, and let $\Naz'(w,s)$ denote the distribution over convex subsets of $\R^n$ where a draw $\bK\sim\Naz'(w,s)$ is generated as follows: 
\begin{enumerate}
    \item Draw $\g{1}, \dots \g{s} \sim N(0,I_n)$ and set 
    \[
        \bH'_i := \{x \in \R^n : x\cdot\g{i} \leq w\}~\text{for}~i\in[s]\,.
    \]
    \item Output $\bK \sim \cap_i \bH'_i$. 
\end{enumerate}
This distribution---a slight modification of Nazarov's construction which we will use in~\Cref{subsec:e-5-4}---was shown by De, Nadimpalli, and Servedio~\cite{DNS23-polytope} to well-approximate the $\ell_2$-ball of Gaussian measure $1/2$ for appropriate $w$ and $s$. 
In particular, it follows from Theorems~27 and 70 of~\cite{DNS23-polytope} that for $w = \Theta(n^{3/4})$ and $s = \exp(\Theta(\sqrt{n})$, 
\begin{equation} \label{eq:ortlieb}
    \Ex_{\bK\sim \Naz'(w,s)}\big[\TInf[\bK]\big] \geq \Omega(\sqrt{n})\,. 
\end{equation}
To be precise, the above fact relies on an observation made during the proof of Theorem~70 of \cite{DNS23-polytope}: see the paragraph preceding the final displayed equation in its proof. 
It is also possible to directly obtain~\Cref{eq:ortlieb} by computing \smash{$\E[\TInf[\bK]]$} via \Cref{eq:spectral-def}.\footnote{Indeed, note that it follows from~\Cref{eq:spectral-def} and Fubini's theorem that 
\[
    \Ex_{\bK\sim\Naz'(w,s)}\big[\TInf[\bK]\big] = \Ex_{\bx\sim N(0,I_n)}\sbra{\Phi\pbra{\frac{w}{\|\bx\|}}^s\pbra{n-\|\bx\|}^2}\,,
\]
and this can be readily estimated using standard tail bounds for univariate Gaussian random variables (cf.~\Cref{prop:gaussian-tails}) and chi-squared random variables~\cite{laurent2000}. 
}

Next, we will use~\Cref{lemma:surf-integral} to obtain an upper bound on the L.H.S.~of \Cref{eq:ortlieb}. 
We say that $K$ in the support of $\Naz'(r,s)$ is \emph{good} if, for some (large) absolute constant $c'$ $\|g^{(i)}\| \leq c'\sqrt{n}$ for all $i \in [s]$, and will say that it is \emph{bad} otherwise. 
Note that \Cref{prop:gaussian-norm} and a union bound together imply that $\Pr[\bK~\text{is good}] \geq 1-e^{-C'n}$ for some absolute constant $C'$. 
We thus have 
\begin{align*}
    \Ex_{\bK\sim\Naz'(w,s)}\big[\TInf[\bK]\big] 
    &\leq \Ex_{\bK\sim\Naz'(w,s)}\big[\TInf[\bK] \big|\bK~\text{is good}\big]  + e^{-C'n}\Ex_{\bK\sim\Naz'(w,s)}\big[\TInf[\bK]\big|\bK~\text{is bad}\big]  \\ 
    &\leq \Ex_{\bK\sim\Naz'(w,s)}\big[\TInf[\bK] \big|\bK~\text{is good}\big] + e^{-\Theta(n)} \\
    &\leq \Theta(n^{1/4})\cdot\Ex_{\bK\sim\Naz'(w,s)}\sbra{\GSA(\bK)\big|\bK~\text{is good}} + e^{-\Theta(n)}\,, \tag{\Cref{lemma:surf-integral}}
\end{align*}
where the second inequality relied on the fact that $\TInf[L] \leq O(\sqrt{n})$ for all $L\sse\R^n$ (cf.~the discussion following~\Cref{eq:influence-ub}), and the final inequality also relied on the choice of $w = \Theta(n^{3/4})$. 
Together with~\Cref{eq:ortlieb}, this immediately gives the existence of a good $K$ with $\GSA(K) \geq \Omega(n^{1/4})$. 

\subsection{Recovering Nazarov's Constant}
\label{subsec:e-5-4}

We now refine the argument sketched in~\Cref{subsec:warmup} to prove~\Cref{thm:naz}. 
Let $r := n^{1/4}$ and $s > 0$ be a parameter that we will set later. 
(We write $r$ instead of $w$ to distinguish this section's construction from that of the previous one.)
Let $\Naz(r,s)$ denote the distribution over convex subsets of $\R^n$ where a draw $\bK \sim \Naz(r,s)$ is generated as follows: 
\begin{enumerate}
	\item Draw $\vv{1}, \dots, \vv{s} \sim \S^{n-1}$ and set 
		\[
			\bH_i := \{ x \in \R^n : x\cdot\vv{i} \leq r \}~\text{for}~i\in[s]\,.
		\]
	\item Output $\bK := \bigcap_{i} \bH_i$. 
\end{enumerate}

Note that for every $K$ in the support of $\Naz(r,s)$, we have $(x\cdot\nu_x) = r$ for all $x \in \partial K$. 
The following is then immediate from~\Cref{lemma:surf-integral}:
\begin{equation} \label{eq:naz-SA-inf}
	\Ex_{\bK \sim \Naz(r,s)}\big[\TInf[\bK]\big] = r \cdot \Ex_{\bK\sim\Naz(r,s)}\sbra{\GSA(\bK)}\,.
\end{equation}

On the other hand, computing $\TInf[\bK]$ using \Cref{def:influence} gives  
\begin{align*}
	\Ex_{\bK \sim \Naz(r,s)}\big[\TInf[\bK]\big] 
	&= \Ex_{\bK\sim\Naz(r,s)}\sbra{\frac{d}{dt}\Vol((1+t)
	\bK)\bigg|_{t=0}} \\
	&= \Ex_{\bK\sim\Naz(r,s)}\sbra{\frac{d}{dt}\Prx_{\bx\sim N(0,I_n)}\sbra{\bx\in(1+t)
	\bK}\bigg|_{t=0}} \\
	&= \Ex_{\bx\sim N(0,I_n)}\sbra{\frac{d}{dt}\Prx_{\bK\sim \Naz(r,s)}\sbra{\bx\in (1+t)\bK}\bigg|_{t=0}} \tag{Fubini}\\
	&= \Ex_{\bx\sim N(0,I_n)}\sbra{\frac{d}{dt}\pbra{\Prx_{\bv\sim\S^{n-1}}\sbra{\bx\cdot\bv\leq (1+t)r}}^s\bigg|_{t=0}}\,.
\end{align*}
Applying the chain rule, we get 
\[
	\frac{d}{dt}\pbra{\Prx_{\bv\sim\S^{n-1}}\sbra{x\cdot\bv\leq (1+t)r}}^s\bigg|_{t=0} 
	= 
	s\cdot \Prx_{\bv\sim\S^{n-1}}\sbra{x\cdot\bv\leq r}^{s-1}\cdot\frac{d}{dt}\pbra{\Prx_{\bv\sim\S^{n-1}}\sbra{x\cdot\bv\leq r(1+t)}}\bigg|_{t=0}\,.
\]
It is readily verified using the Leibniz rule that 
\[
	\frac{d}{dt}\pbra{\Prx_{\bv\sim\S^{n-1}}\sbra{x\cdot\bv\leq r(1+t)}}\bigg|_{t=0} = \tau_n\cdot\frac{r}{\|x\|}\cdot\pbra{1 - \frac{r^2}{\|x\|^2}}_{+}^{(n-3)/2}\,,
\]
where $(a)_+ := \max\{a, 0\}$. 
We thus have 
\begin{equation} \label{eq:inf-dilation-equality}
	\Ex_{\bK \sim \Naz(r,s)}\big[\TInf[\bK]\big] = \Ex_{\bx\sim N(0,I_n)}\sbra{s\cdot\frac{r}{\|\bx\|}\cdot\tau_n\cdot\pbra{1-\frac{r^2}{\|\bx\|^2}}^{(n-3)/2}_{+}\cdot\Prx_{\bv\sim\S^{n-1}}\sbra{\bx\cdot\bv\leq r}^{s-1}}\,.
\end{equation}

The remainder of the argument will establish 
\begin{equation} \label{eq:broadsheet}
	\text{R.H.S.~of \eqref{eq:inf-dilation-equality}} \geq e^{-5/4}\cdot\sqrt{n}(1-o(1))\,,
\end{equation}
which, together with~\Cref{eq:naz-SA-inf}, completes the proof of~\Cref{thm:naz}. 
Let $t = n^{1/4}$, and let $A := \{x : \|x\| \in [\sqrt{n} - t, \sqrt{n} + t] \}$.\footnote{We note that any choice of $t$ that is $\omega(1)$ but $o(n^{1/2})$ will do.} 
By~\Cref{prop:gaussian-norm}, we have $\Vol(A) \geq 1 - 2\exp(-Ct^2) = 1-o(1)$. 
Since the quantity inside the expectation in~\Cref{eq:inf-dilation-equality} is non-negative, we have 
\begin{equation} \label{eq:nitro-shandy}
	\text{R.H.S.~of \eqref{eq:inf-dilation-equality}} \geq \pbra{1 - o(1)}\cdot\inf_{x\in A} \cbra{s\cdot\frac{r}{\|x\|}\cdot\tau_n\cdot\pbra{1-\frac{r^2}{\|x\|^2}}^{(n-3)/2}_{+}\cdot\Prx_{\bv\sim\S^{n-1}}\sbra{x\cdot\bv\leq r}^{s-1}}\,. 
\end{equation}

Let $F : \R^n \to \R$ be the function 
\[
    F(x) = \frac{r}{\|x\|}\cdot\pbra{1-\frac{r^2}{\|x\|^2}}^{(n-3)/2}_{+}\,.
\]
From~\Cref{lemma:cap-ub}, we get
\[
    \Prx_{\bv\sim\S^{n-1}}\sbra{x\cdot\bv \leq r} \geq 1 - \frac{\tau_n\|x\|}{r(n-3)}\exp\pbra{-\frac{r^2(n-3)}{2\|x\|^2}} =: 1- G(x)\,. 
\]
We take $s$ to satisfy $s\cdot\inf_{x \in A} \cbra{F(x)} = c_1$ where $c_1$ is a parameter we will set later. 
To control $1-s\cdot G(x)$, note that  
\[
    \frac{F(x)}{G(x)} = \frac{r^2 (n-3)}{\tau_n\cdot \|x\|^2}\cdot\pbra{1-\frac{r^2}{\|x\|^2}}^{(n-3)/2}_{+} \exp\pbra{\frac{r^2(n-3)}{2\|x\|^2}}\,.
\]
Recall that $\ln(1-a) \geq -a - a^2/2 - C'a^3$ for some absolute constant $C'$ (this is readily seen by taking a Taylor expansion).
Consequently, for $x \in A$ we have 
\[
    \frac{F(x)}{G(x)} \geq (1-o(1))\frac{\sqrt{n}}{\tau_n}\exp\pbra{-\frac{(n-3)}{2}\pbra{\frac{r^4}{2\|x\|^4} + C'\frac{r^6}{\|x\|^6}}} \geq \frac{\sqrt{2\pi}}{e^{1/4}}(1-o(1))\,,
\]
where the second inequality relies on \Cref{eq:tau-asymp} and our choice of $r = n^{1/4}$. 
In particular, 
\begin{equation} \label{eq:maine}
    \inf_{x\in A} F(x) \geq \frac{\sqrt{2\pi}}{e^{1/4}}(1-o(1))\cdot\inf_{x\in A} G(x)
\end{equation}
Finally, it is readily verified that for $x \in A$ we have 
\begin{equation} \label{eq:mass}
    \frac{G(x)}{\inf_{x\in A} G(x)} = 1+o(1)\,. 
\end{equation} 

Returning to~\Cref{eq:nitro-shandy}, we can rewrite it as
\begin{align*}
    \text{R.H.S.~of~\eqref{eq:inf-dilation-equality}} 
    &\geq (1-o(1))\cdot \tau_n \cdot \inf_{x\in A}\cbra{s\cdot F(x)\cdot (1-G(x))^{s-1}}\\
    &\geq (1-o(1))\cdot \tau_n \cdot c_1 \cdot \inf_{x\in A} \cbra{(1-G(x))^s}\,. 
\end{align*}
Using the inequality $1-a \geq e^{-a}(1-a^2 e^a)$ for $a \geq 0$, we get
\begin{align}
    \text{R.H.S.~of~\eqref{eq:inf-dilation-equality}} 
    &\geq (1-o(1))\cdot \tau_n \cdot c_1 \cdot \inf_{x\in A} \cbra{\pbra{e^{-G(x)}\pbra{1-G(x)^2e^{G(x)}}}^s}  \nonumber \\ 
    &\geq  (1-o(1))\cdot \tau_n \cdot c_1 \cdot \inf_{x\in A} \cbra{e^{-s G(x)} \pbra{1 - s G(x)^2e^{G(x)}}}  \label{eq:stitch} \\
    &\geq (1-o(1))\cdot \tau_n \cdot c_1 \exp\pbra{-\frac{c_1 e^{1/4}}{\sqrt{2\pi}}}\,,\label{eq:roadrunner}
\end{align}
where \Cref{eq:stitch} uses Bernoulli's inequality, and \Cref{eq:roadrunner} relies on (a) \Cref{eq:maine,eq:mass}, and (b) the fact that $1 - sG(x)^2e^{G(x)} = 1-o(1)$ for $x\in A$. 
Finally, \Cref{eq:tau-asymp} implies that 
\[
    \text{R.H.S.~of~\eqref{eq:inf-dilation-equality}} \geq (1-o(1))\cdot\sqrt{n}\cdot \frac{c_1}{\sqrt{2\pi}}\exp\pbra{-\frac{c_1e^{1/4}}{\sqrt{2\pi}}}\,.
\]
Optimizing with respect to $c_1$ then yields \Cref{eq:broadsheet}, completing the proof. 
\qed 

\bigskip

One can check that modifying 
$r$ by a constant factor does not lead to a better constant than that obtained by~\Cref{thm:naz}. We leave this verification to the interested reader. 

\subsection{Discussion: Upper Bounds on GSA via Convex Influence}
\label{sec:GSA-ubs}

It is natural to ask whether the notion of convex influence can also be used to derive effective \emph{upper} bounds on Gaussian surface area.  
For simplicity, we assume throughout that $K \subseteq \R^n$ is convex and contains the origin. 
Next, let $\rinn(K)$ denote the radius of the largest origin-centered ball contained in $K$. 
Since $0^n \in K$, we have $\rinn(K) \geq 0$. 
It follows from the definition of $\TInf[K]$ (cf.~\Cref{def:influence}) that 
\[
    \TInf[K] 
    = \lim_{t\to 0}\frac{\Vol((1+t)K \setminus K)}{t} 
    \geq \lim_{t\to 0}\frac{\Vol(K_{t\rinn(K)} \setminus K)}{t} 
    = \rinn(K)\cdot\GSA(K)\,.
\]
Combining this with~\Cref{eq:influence-ub} gives 
\begin{equation} \label{eq:final-deg2}
    \GSA(K) \leq \frac{1}{\rinn(K)}\cdot\pbra{2n\sumi \wh{K}(2e_i)^2}^{1/2}\,.
\end{equation}
Using Parseval's identity (see~\Cref{eq:fourier-formulas}), we can obtain a more convenient expression: 
\begin{equation} \label{eq:final-var}
    \GSA(K) \leq \frac{\sqrt{2n}\cdot\sqrt{\Var[K]}}{\rinn(K)}\,.
\end{equation}
\Cref{eq:final-var} is particularly effective when $\Vol(K)$ is large, as then $\Var[K]$ is close to zero and $\rinn(K) \geq \Omega(1)$ by convexity. 

One might optimistically hope that the right-hand side of~\Cref{eq:final-var} could be shown to be $O(n^{1/4})$ in general, yielding an alternative proof of Ball’s bound (\Cref{thm:ball}). 
Unfortunately, this is false even in simple cases: for example, in one dimension, the bound fails for a thin slab $S=\{x:|x|\leq \theta\}$ as $\theta\to 0$. 
It remains possible, however, that the refined bound from \Cref{eq:final-deg2} or~\Cref{lemma:surf-integral} itself could furnish a new proof of Ball's theorem. 
We leave this as an open direction for future work.

\section*{Acknowledgements}
\label{sec:acks}

C.P.~is funded by the MIT Undergraduate Research Opportunities Program (UROP). 
We thank Elchanan Mossel and Rocco Servedio for advice and encouragement to write this note, and Amit Rajaraman for helpful comments on an earlier version. 

\bibliography{allrefs.bib}
\bibliographystyle{alphaurl}

\appendix

\section{Hermite Analysis}
\label{sec:hermite}

Our notation and terminology follow Chapter~11 of ~\cite{odonnell-book}. 
For $n \in \N_{>0}$, we write $L^2(\R^n, N(0,I_n))$ to denote the space of functions $f: \R^n \to \R$ that have finite second moment under the Gaussian distribution, i.e. $f\in L^2$ if 
\[
    \|f\|^2 = \Ex_{\bz \sim N(0,I_n)} \left[f(\bz)^2\right]^{1/2} < \infty\,.
\]
When the dimension $n$ is clear from context, we will write $L^2 = L^2(\R^n, N(0,I_n))$ for simplicity. 
We view $L^2$ as an inner product space with 
\[
    \la f, g \ra := \Ex_{\bx \sim  N(0,I_n)}[f(\bx)g(\bx)]\,.
\]

We recall the Hermite basis for $L^2$:

\begin{definition}[Hermite basis]
	The \emph{Hermite polynomials} $(h_j)_{j\in\N}$ are the univariate polynomials defined as
	$$h_j(x) = \frac{(-1)^j}{\sqrt{j!}} \exp\left(\frac{x^2}{2}\right) \cdot \frac{d^j}{d x^j} \exp\left(-\frac{x^2}{2}\right).$$
\end{definition}

In particular, one can check that 
\begin{equation} \label{eq:h2}
    h_2(x) = \frac{x^2-1}{\sqrt{2}}\,.
\end{equation}
The following fact is standard:

\begin{fact} [Proposition~11.33 of~\cite{odonnell-book}] \label{fact:hermite-orthonormality}
	The Hermite polynomials $(h_j)_{j\in\N}$ form a complete, orthonormal basis for $L^2(\R, N(0,1))$. For $n > 1$ the collection of $n$-variate polynomials given by $(h_\alpha)_{\alpha\in\N^n}$ where
	$$h_\alpha(x) := \prod_{i=1}^n h_{\alpha_i}(x)$$
	forms a complete, orthonormal basis for $L^2$. 
\end{fact}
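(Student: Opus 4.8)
The plan is to argue in three stages: (i) show the univariate Hermite polynomials $(h_j)_{j\in\N}$ are orthonormal in $L^2(\R, N(0,1))$, (ii) show their span is dense there, and (iii) deduce the multivariate statement by tensorization. For (i), the defining formula rewrites as $h_j(x)\phi(x) = \frac{(-1)^j}{\sqrt{j!}}\phi^{(j)}(x)$, where $\phi(x) = (2\pi)^{-1/2}e^{-x^2/2}$ and $\phi^{(j)}$ is its $j$-th derivative; substituting this into $\langle h_j, h_k\rangle = \int_\R h_j h_k \phi$ and integrating by parts $j$ times (boundary terms vanish since each $\phi^{(m)}$ is a polynomial times $e^{-x^2/2}$) transfers all derivatives onto $h_k$, leaving $\langle h_j, h_k \rangle = \frac{1}{\sqrt{j!}}\int_\R h_k^{(j)}\phi$. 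A short induction shows $\sqrt{j!}\,h_j$ is monic of degree $j$, so $h_k$ has degree exactly $k$; hence $h_k^{(j)} \equiv 0$ for $k < j$ and $h_j^{(j)} \equiv \sqrt{j!}$, which yields $\langle h_j, h_k \rangle = 0$ for $k \neq j$ (using symmetry of the inner product for $k > j$) and $\langle h_j, h_j\rangle = 1$.

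For (ii), since $\deg h_j = j$ the span of $\{h_0, \dots, h_d\}$ equals the space of polynomials of degree at most $d$, so it suffices to prove that polynomials are dense in $L^2(\R, N(0,1))$ --- equivalently, that any $f$ in this space with $\int_\R f(x) x^k \phi(x)\,dx = 0$ for every $k \in \N$ is zero almost everywhere. I would establish this by the standard Fourier argument: Cauchy--Schwarz together with the Gaussian decay of $\phi$ shows $x \mapsto f(x) e^{zx}\phi(x)$ is integrable for every complex $z$, so $F(z) := \int_\R f(x) e^{zx}\phi(x)\,dx$ is entire, and differentiating under the integral gives $F^{(k)}(0) = \int_\R f(x) x^k \phi(x)\,dx = 0$ for all $k$, whence $F \equiv 0$. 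In particular the Fourier transform of the $L^1$ function $f\phi$ vanishes identically, so $f\phi = 0$ almost everywhere by uniqueness of the Fourier transform, and since $\phi > 0$ we conclude $f = 0$ almost everywhere. Combined with (i), this shows $(h_j)_{j \in \N}$ is a complete orthonormal basis for $L^2(\R, N(0,1))$.

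For (iii), I would use that $L^2(\R^n, N(0,I_n))$ is the $n$-fold Hilbert-space tensor product of $L^2(\R, N(0,1))$, under which $h_\alpha$ corresponds to $h_{\alpha_1} \otimes \cdots \otimes h_{\alpha_n}$. Orthonormality of $(h_\alpha)_{\alpha \in \N^n}$ then follows from (i) and Fubini, and completeness follows from the general fact that tensoring orthonormal bases of the factors gives an orthonormal basis of the tensor product: concretely, finite sums of product functions $x \mapsto \prod_{i} f_i(x_i)$ with $f_i \in L^2(\R, N(0,1))$ are dense in $L^2(\R^n, N(0,I_n))$ (e.g.\ indicators of boxes are dense and each is such a product), and each $f_i$ is an $L^2$-limit of polynomials by (ii), so products of polynomials --- and hence $\mathrm{span}\{h_\alpha : \alpha \in \N^n\}$ --- are dense.

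I expect the main obstacle to be step (ii): the density of polynomials in $L^2(\R, N(0,1))$ is the only part requiring genuine analysis, and the delicate points there are justifying that $F$ is entire (differentiation under the integral sign, which the Gaussian weight makes possible) and invoking uniqueness of the Fourier transform. By contrast, the orthonormality computation in (i) is routine integration by parts, and the reduction to one variable in (iii) is a standard tensorization argument.
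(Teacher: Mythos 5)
The paper states this as a cited fact (Proposition~11.33 of O'Donnell's textbook) and does not reproduce a proof, so there is no in-paper argument to compare against; the relevant question is simply whether your proof is sound, and it is. Your three-step decomposition is the standard one and each step checks out: (i) the Rodrigues-type identity $h_j(x)\phi(x) = (-1)^j (j!)^{-1/2}\phi^{(j)}(x)$ together with $j$-fold integration by parts reduces $\la h_j, h_k\ra$ to $(j!)^{-1/2}\int h_k^{(j)}\phi$, and the observation that $\sqrt{j!}\,h_j$ is monic of degree $j$ (so $h_k^{(j)}\equiv 0$ for $k<j$ and $h_j^{(j)}\equiv\sqrt{j!}$) gives orthonormality; (ii) the entire-function argument is the classical route to density of polynomials under a measure with sub-Gaussian tails; and (iii) the tensorization via density of product functions (indicators of boxes) in $L^2(\R^n, N(0,I_n))$ is correct. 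One small refinement for step (ii): rather than asserting differentiation under the integral directly, the cleanest justification that $F(z) = \int_\R f(x)e^{zx}\phi(x)\,dx$ is entire is Morera's theorem combined with Fubini (showing $\oint_\gamma F = 0$ for every closed triangle $\gamma$), after which $F^{(k)}(0) = \int f(x)x^k\phi(x)\,dx$ follows from the Cauchy integral formula and the same Fubini-type interchange; the dominated-convergence route you sketch also works, with $|f(x)|e^{R|x|}\phi(x)\in L^1$ for $|z|\leq R$ (which your Cauchy--Schwarz remark supplies) serving as the local majorant for the difference quotients. Either way the step is rigorous, and your identification of (ii) as the only genuinely analytic step is accurate.
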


Given a function $f \in L^2$ and $\alpha \in \N^n$, we define its \emph{Hermite coefficient on} $\alpha$ as $\wh{f}(\alpha) := \la f, h_\alpha \ra$. It follows that $f:\R^n\to\R$ can be uniquely expressed as 
\[f = \sum_{\alpha\in\N^n} \wh{f}(\alpha)h_\alpha\]
with equality holding in $L^2$. 
One can check that Parseval's identity holds in this setting:
\begin{equation} \label{eq:parseval}
    \abra{f, g} = \sum_{\alpha\in\N^n}\wh{f}(\alpha) \wh{g}(\alpha)\,. 
\end{equation}
It is also readily verified that for $f \in L^2$, 
\begin{equation} \label{eq:fourier-formulas}
    \Ex_{\bx\sim N(0,I_n)}[f(\bx)] = \wh{f}(0^n) 
    \qquad\text{and}\qquad 
    \Varx_{\bx\sim N(0,I_n)}\sbra{f(\bx)} = \sum_{\alpha\neq 0^n} \wh{f}(\alpha)^2\,.
\end{equation} 
\end{document}